\newtheorem{theorem}{Theorem}[section]
\newtheorem{lemma}[theorem]{Lemma}
\theoremstyle{definition}
\newtheorem{definition}[theorem]{Definition}
\theoremstyle{remark}
\newtheorem{remark}[theorem]{Remark}
\numberwithin{equation}{section}
\DeclareMathOperator{\modulo}{mod}
\DeclareMathOperator{\ima}{im}
\DeclareMathOperator{\add}{add}
\DeclareMathOperator{\Hom}{Hom}
\DeclareMathOperator{\coker}{coker}
\DeclareMathOperator{\ident}{id}
\DeclareMathOperator{\out}{out}
\DeclareMathOperator{\din}{in}
\DeclareMathOperator{\nilp}{nil}
\DeclareMathOperator{\module}{-mod}
\begin{document}

\title{Mutation of representations and nearly Morita Equivalence}

\author{ Diego Velasco}
\address{Instituto de Matem\'aticas, Ciudad Universitaria, 04510 M\'exico D.F., M\'exico}
\curraddr{}
\email{diego.velasco@matem.unam.mx}
\thanks{}


\keywords{Mutations of representations}

\date{October 2015}

\dedicatory{}

\begin{abstract}
In \cite{BIRS} it was proved, based on \cite{DWZI}, that the Jacobian algebra of two quivers with potential related by a QP-mutation are nearly Morita equivalent. They proved, using  Axiom of Choice, that the natural functor $\mu_k$ is an equivalence by showing that $\mu_k$ is full, faithfull and dense. In this note we provide a quasi-inverse $\mu_k^-$ to $\mu_k$ without Axiom of Choice.

\end{abstract}

\maketitle


\section{Introduction}
Let  $Q$ be an acyclic quiver, $K$ a field and $k\in Q_0$ a  sink of $Q$. In \cite{BGP}, Bernstein-Gelfand-Ponomarev  introduced a pair of adjoint functors $(F^-_k, F^+_k)$ called now BGP reflection functors or BGP-functors. We have
\begin{equation}\label{F+}
\xymatrix{  K\langle Q\rangle\module\ar@<0.5ex>[r]^{F^+_k}   &  K\langle Q'\rangle\module\ar@<0.5ex>[l]^{F^-_k} },
\end{equation}
where $Q'$ is obtained from $Q$ by changing the direction of all arrows incident to $k$. Here, $K\langle Q\rangle$ and $K\langle Q'\rangle$ are the path algebras of $Q$ and $Q'$ respectively. 
It was noted in \cite{APR} and \cite{DR} that these BGP-functors induce mutually quasi-inverse functors between the quotient categories $K\langle Q\rangle\module/[\add S_k]$ and $K\langle Q'\rangle\module/[\add S'_k]$ where $[\add S_k]$ is the ideal of morphisms that factorize through direct sums  of $S_k$, the simple $K\langle Q\rangle$-module at $k$.
Following Ringel, \cite{Rin}, we say in this case that $K\langle Q\rangle$ and $K\langle Q'\rangle$ are \textit{nearly Morita equivalent}. 

More generally one can consider, for a 2-acyclic quiver and any $k\in Q_0$ , the quiver mutation $\mu_k(Q)$, see for example \cite{FZI}. However, in this case we may not expect to relate $K\langle Q\rangle\module$ and $K\langle \mu_k(Q)\rangle\module$ in a meaningful way.

In \cite{DWZI}, Derksen, Weyman and Zelevinsky defined  mutations of  quivers with potential and also they defined  mutations of the representations of a quiver with potential. In \cite[Theorem 10.13]{DWZI} it was proved that the mutation of representations is an involution up to right-equivalence. 

 In \cite[Section 7]{BIRS}  a natural functor $\mu_k$ is constructed.   This functor $\mu_k$ is defined from  $\mathcal{P}(Q,S)$-$\modulo/[\add S_k]$ to $\mathcal{P}(\mu_k(Q,S))$-$\modulo/[\add S'_k] $, where $\mathcal{P}(Q,S)$ denotes the Jacobian algebra of a quiver with potential $(Q,S)$. This functor is based on the notion of mutation of representation introduced in \cite{DWZI} . By making use of \cite[Theorem 10.13]{DWZI} it was proved,  \cite[Theorem 7.1]{BIRS}, that $\mu_k$  is full, faithful and dense. It is well known that this implies, by the Axiom of Choice,  that the functor is an equivalence (see, for example \cite[Appendix A, 2.5]{ASS}).

Let us write from now on $\mu_k^+:=\mu_k$. In this note we produce explicitly a quasi-inverse $\mu_k^-$ of $\mu_k^+$, which is, in fact, quite similar to $\mu_k^+$. 

Since we are interested in the factor category $\mathcal{P}(Q,S)$-$\modulo/[\add S_k]$ we avoid to work with decorated representations. To give the quasi-inverse $\mu_k^-$ explicitly we proceed as follows. First at all, following \cite{DWZI}, for the convenience of the reader in Section \ref{section2} and Section \ref{section3} we recall some background on mutation of quivers with potential and the mutation of their representations.  The rest of this note is devoted    to define $\mu_k^+$, $\mu_k^-$, and show that they are quasi-inverses, see Theorem \ref{qinverse}.
\\ 

\textbf{Acknowledgements.} Thanks to Daniel Labardini-Fragoso and Christof Gei\ss \ for  suggesting  this subject and for helpful discussions.
\section{Background on quivers with potential}\label{section2}
A \textit{quiver}  $Q=(Q_0, Q_1, t, h)$ consists of a finite set of vertices $Q_0$,  a finite set of arrows $Q_1$ and two maps $t,h: Q_1\rightarrow Q_0$ (head, and tail). For each $a\in Q_1$we write $a:t(a)\rightarrow h(a)$. Given a  algebraically closed field $K$  we will denote by $R=\bigtimes_{i\in Q_0} K$ to the \textit{vertex space} and by $A=\bigtimes_{\alpha \in Q_1} K$ to the \textit{arrow space}.  We have $R$ is a semisimple $K$-algebra with the usual addition and multiplication defined coordinate-wise and  $A$ is an $R$-bimodule with following structure:
$$ (x_l)_{l\in Q_0} \cdot ((y_a)_{a\in Q_1})= (x_{h(a)}y_a)_{a\in Q_1}$$
$$((y_a)_{a\in  Q_1})\cdot (x_l)_{l\in Q_0}= (y_ax_{t(a)})_{a\in Q_1}.$$

 For $l>0$, let $A^l=A\otimes_R\cdots \otimes_R A$ be the $l$-fold tensor product over $R$ of $A$ with itself as $R$-bimodule. The \textit{path algebra} of $Q$ is defined as the tensor algebra $K\langle Q \rangle=\bigoplus_{l\geq 0}{A^l}$ and the \textit{complete path algebra} is defined as  the complete tensor algebra $K \langle\langle Q \rangle \rangle=\prod_{l\geq0}{A^l}$. 

We say that a sequence of arrows  $\alpha=a_la_{l-1}\cdots a_2a_1$, is a \textit{path} of $Q$ if  $t(a_{k+1})=h(a_{k})$, for $k=1, \ldots l-1$, in this case, we define  the \textit{length} of $\alpha$ as $l$. We say that $\alpha$ is a \textit{cycle} if $h(a_l)=t(a_1)$,  Then we can think  the elements of  $K\langle Q \rangle$ as  $K$-linear combinations of paths and the elements of $K \langle\langle Q \rangle \rangle$ as  possibly infinite $K$-linear combinations of paths. 

Let us recall some important fact about the complete path algebra  $K \langle\langle Q \rangle \rangle$. Let $\mathfrak{M}=\prod_{l\geq 1}{A^l}$ be the two-sided ideal of $K \langle\langle Q \rangle \rangle$ generated by arrows of $Q$. Then $K \langle\langle Q \rangle \rangle$ can be viewed as a \textit{topological} $K$-\textit{algebra} with the powers of $\mathfrak{M}$ as a basic system of open neighborhoods of $0$. This topology is known as $\mathfrak{M}$-\textit{adic topology}. Now, giving  $I\subseteq   K \langle\langle Q \rangle \rangle$ we can calculate the closure of $I$ as $\overline{I}=\bigcap_{l\geq0}{(I+\mathfrak{M}^l)}$.

We say, by a slight abuse of language, that a $K$-algebras homomorphism $\phi: K\langle\langle Q\rangle\rangle \rightarrow K\langle\langle Q'\rangle\rangle$  is a \textit{homomorphism of} $R$-\textit{algebras} if  $\phi(r)=r$  for each $r\in R$. If this is the case,  $\phi$ is continuous as morphism of topological algebras (see \cite[Section 2]{DWZI}).

A \textit{finite-dimensional representation} of $Q$ over $K$ is a pair $((M_i)_{i\in Q_0}, (M_a)_{a\in Q_1})$ where $M_i$ is a finite-dimensional vector space over $K$ for each $i\in Q_0$ and $M_a: M_{t(a)}\rightarrow M_{h(a)}$ is a $K$-linear map. Here the word \textit{representation} means finite dimensional representation. We say that $M$ is a  \textit{nilpotent} representation if there is an $n>0$ such that for every  path $a_na_{n-1}a\ldots a_1$ of length $n$ in $Q$  we have $M_{a_n}M_{a_{n-1}}\ldots M_{a_1}=0$.

 We denote by $\nilp_K(Q)$ the category of nilpotent representations of $Q$, and by $K\langle\langle Q\rangle\rangle$-$\modulo$ the category of finite-dimensional left $K\langle\langle Q\rangle\rangle$-modules. It is well known that the categoriy of representations of $Q$ and the category of $K\langle Q \rangle$-modules are equivalent. In \cite[Section 10]{DWZI} it was observed  that  $\nilp_K(Q)$ and $K\langle\langle Q\rangle\rangle$-$\modulo$ are equivalent.

\subsection{Quivers with potential and their mutations}
In this preliminary section, for the convenience of the reader, we shall  recall some basic definitions and facts from \cite{DWZI}.
Let $Q$ be  a quiver. We say that $S\in K\langle\langle Q\rangle\rangle$ is a \textit{potential} for $Q$ if $S$ is a, possibly infinite, $K$-linear combination  of cycles in $Q$ . Given two potentials $S$ and $W$  we say that they are \textit{cyclically equivalent} and write $S\sim_{\operatorname{cyc}} W$, if $S-W$ is in the closure of the sub-vector space of $K\langle\langle Q\rangle\rangle$   generated by all elements of the form $a_1a_2\cdots a_{n-1}a_n -a_2\cdots a_{n-1}a_na_1$, with  $a_1a_2\cdots a_{n-1}a_n$ a cycle on $Q$.

\begin{definition}
We say $(Q,S)$ is a \textit{quiver with potential} (QP) if $Q$ does not have loops, $S$ is a potential for $Q$ and if any two different cycles appearing with non-zero coefficient   in $S$ are not  cyclically equivalent.
\end{definition}

Given $a\in Q_1$ and a cycle $a_na_{n-1}\cdots a_1$ in $Q$, define the \textit{cyclic derivative} of $a_na_{n-1}\cdots a_1$ with respect to $a$ as follows:
$$\partial_{a}(a_na_{n-1}\cdots a_1)=\sum\limits_{k=1}^{n}{\delta_{a, a_k}a_{k-1}a_{k-2}\cdots a_1a_na_{n-1}\cdots a_{k+2}a_{k+1}}.$$
We extend this definition by $K$-linearity and continuity to all potentials for $Q$.
\begin{definition}
Let $(Q,S)$ be a quiver with potential. We define the \textit{Jacobian ideal}  $\mathcal{J}(Q,S)$ as the closure of the ideal on $K\langle\langle Q\rangle\rangle$ generated by  all cyclic derivatives $\partial_a(S)$ with $a\in Q_1$. The quotient $ K\langle\langle Q\rangle\rangle/J(Q,S)$ is called the \textit{Jacobian algebra} of $(Q,S)$ and is denoted as $\mathcal{P}(Q,S)$.
\end{definition}
A quiver with potential $(Q,S)$ is \textit{trivial} if the $R$-$R$-bimodule on $k\langle\langle Q\rangle\rangle$ generated by $\partial_a(S)$ is $A$ with $a\in Q_1$. We call $(Q,S)$ \textit{reduced} if $S$ does not have cycles of length 2. 

\begin{definition}
Let $(Q,S)$ and $(Q',S')$ be quivers with potential. We say $\phi: K\langle\langle Q\rangle\rangle \rightarrow K\langle\langle Q'\rangle\rangle$ is a \textit{right-equivalence} of QP if $\phi$ is a $R$-algebra isomorphism and $\phi(S)\sim_{\operatorname{cyc}}S'$. 
\end{definition}

In \cite[Theorem 4.6]{DWZI} it was proved that given a quiver with potential $(Q,S)$,  there are a trivial quiver with potential $(Q_{\operatorname{triv}},S_{\operatorname{triv}})$, a reduced quiver with potential $(Q_{\operatorname{red}}, S_{\operatorname{red}})$ and a right-equivalence $\phi$ such that $\phi(S)\sim_{\operatorname{cyc}}S_{\operatorname{triv}}+S_{\operatorname{red}}$. This result is called \textquotedblleft the splitting Theorem\textquotedblright.

Let us recall  the notion of quivers mutation. Let $Q$ be a quiver and $k\in Q_0$ a vertex, if $Q$ does not have any cycle of length 2 (2-cycle) based at $k$, the mutation of $Q$ with respect to $k$, denoted $\mu_k(Q)$, can be obtained by  the next three steps.

\begin{enumerate}
\item For each pair of arrows $a:j\rightarrow k$ and $b:k\rightarrow i$, add a new arrow $[ba]:j\rightarrow i$.\\
\item Replace each arrow $a: j\rightarrow k$ with  $a^*: k\rightarrow j$ and $b:k \rightarrow i$ with $b^*:i\rightarrow k$.\\
\item Remove any maximal disjoint collection of oriented 2-cycles.
\end{enumerate}

Now we define QP-mutation, following \cite{DWZI}. Let $(Q,S)$ be a quiver with potential. Suppose that $Q$ is 2-acyclic and there are no cycles in $S$ that begin at $k$. We define the \textit{mutation} of $(Q,S)$ with respect to  $k$ as $\mu_k(Q,S)= (\widetilde{\mu}_k(Q)_{\operatorname{red}}, \widetilde{S}_{\operatorname{red}})$, where $\widetilde{\mu}_k(Q)$ is  obtained from $Q$ by applying the two first steps of $k^{\operatorname{th}}$-mutation $\mu_k$ and $\widetilde{S}$ is:
\begin{equation}\label{def widetilde{S}}
  \widetilde{S}=[S]+\sum\limits_{a,b\in Q_1: h(a)=k=t(b) }{[ba]a^*b^*},
\end{equation}
where $[S]$ is obtained  from $S$ by replacing any pair of arrows $a:j\longrightarrow k$ and $b:k\longrightarrow i$ in the expansion of $S$ with $[ba]$. In \cite{DWZI} is showed that $\mu_k(Q,S)$ is well defined up to right equivalence. 
\subsection{Mutation of representations}

Let $(Q,S)$ be a quiver with potential. We say that $M$ is a \textit{representation} of $(Q,S)$ if $M$ is a finite-dimensional representation of $Q$  and $M$ satisfies the cyclic derivatives $\partial_a(S)$ for each $a \in Q_1$. Now we extend the notion of right-equivalence from quivers with potential to their representations. Our definition is a bit different from \cite[Definition 10.2]{DWZI}. 

\begin{definition}\label{def r-equivalence}
Let $M$ and $M'$ be representations of $(Q,S)$ and $(Q',S')$. A pair $(\phi, \psi)$ is a \textit{right-equivalence} from $M$ to $M'$ if it satisfies the following:
\begin{enumerate}
\item $\phi:K\langle\langle Q\rangle\rangle \rightarrow K\langle\langle Q'\rangle\rangle$ is a right-equivalence between $(Q,S)$ and $(Q',S')$.\\
\item $\psi: M\rightarrow M'$ is a vector space isomorphism such that $\psi\circ (u\cdot -)= (\phi(u)\cdot -)\circ \psi$, for every $u\in K\langle\langle Q\rangle\rangle$.
\end{enumerate}
\end{definition}

\begin{remark}\label{M tor}
Denote $\Lambda=K\langle\langle Q\rangle\rangle$. Note that if we define a $\Lambda$-module $M'^{\phi}$ as $M'$ like vector space  with the structure $u\cdot m'=\phi(u)\cdot m'$ the second condition of Definition \ref{def r-equivalence} can be restated by saying that $\psi:M\rightarrow M'^{\phi}$ is a  $\Lambda$-module isomorphism.
\end{remark}
 
\begin{definition}\label{redPart}
Let  $$\phi:K\langle\langle Q_{\operatorname{triv}}\bigoplus Q_{\operatorname{red}}\rangle\rangle\rightarrow K\langle\langle Q\rangle\rangle$$ be a right-equivalence. 
 We define the \textit{reduced part} of $M$, denoted $M_{\operatorname{red}}$, as $M^{\phi_{r}}$  (see  Remark \ref{M tor}), where $\phi_{r}$ is the restriction of $\phi$ to $K\langle\langle Q_{\operatorname{red}}\rangle\rangle$.
\end{definition}

Let $(Q,S)$ be a quiver with potential and suppose that $Q$ is 2-acyclic. Mutation of representations of $(Q,S)$ was introduced in \cite{DWZI}. Let $M$ be a representation of $(Q,S)$ and   $k$ a vertex in $Q$. 
Set $\{a_1, a_2\ldots , a_s\}=\{a\in Q_1| \  h(a)=k\}$ with $a_i\neq a_j$ for $i\neq j$ and $\{b_1, b_2, \ldots b_t\}=\{b\in Q_1| \ t(b)=k\}$ with $b_i\neq b_j$ for $i\neq j$. Define

\begin{eqnarray}\label{M_in M_out}
M_{\din}(k)=\bigoplus \limits_{p=1}^{s}{M_{t(a_p)}},  \    \    \    \      M_{\out}(k)=\bigoplus \limits_{q=1}^{t}{M_{h(b_q)}}
\end{eqnarray}

We get  natural linear maps $\alpha_{k, M}:M_{\din}(k)\longrightarrow M_k$ and $\beta_{k, M}:M_k\longrightarrow M_{\out}(k)$, in matrix form
\begin{eqnarray}\label{alpha, beta}
\alpha_{k,M}=(a_1 \ldots a_s),  \  \   \    \ 
\beta_{k, M}=
\left(
\begin{array}{lcr}
b_{1}  \\
b_{2}  \\
\vdots\\
b_{t}  
\end{array}
\right)
\end{eqnarray}

Let $c_1\cdots c_n$ be a cycle in $Q$, we define 

\begin{equation}
\partial_{b_qa_p}(c_1\cdots c_n)= \sum\limits_{j=1}^{n}{\delta_{b_qa_p,c_jc_{j+1}}c_{j+2}\cdots c_nc_1\cdots c_{j-1}},
\end{equation}
for $p=1\ldots s$ and $q=1\ldots t$. We extend this definition by $K$-linearity and continuity to all potentials for $Q$. 

Define the linear map $\gamma_{k,M}:M_{\out}(k)\longrightarrow M_{\din}(k)$ in matrix form

\begin{equation}\label{def gamma}
 (\gamma_{k,M})_{p,q}=\partial_{b_qa_p}(S):M_{h(b_q)}\longrightarrow M_{t(a_p)}
\end{equation}
  
It  is useful to keep in mind the following \textit{local triangle associated} to $M$ at $k$, which summarizes  the data that we got so far:

\begin{equation} 
\xymatrix{ \  & M_{k} \ar@{->}[dr]^{\beta_{k,M}} &  \\
           M_{\din}(k) \ar@{->}[ru]^{\alpha_{k,M}}  & \ \  &  M_{\out}(k) \ar@{->}[ll]^{\gamma_{k,M}} }
\end{equation}
	
We write $(\widetilde{Q},\widetilde{S})=\widetilde{\mu}_k(Q,S)$. In what follows we define the  \textit{pre-mutation}  $\widetilde{\mu}_k(M)$ as a representation of $(\widetilde{Q},\widetilde{S})$, for short $\overline{M}:=\widetilde{\mu}_k(M)$. As vector space, $\overline{M}_i=M_i$, if $i\neq k$, and if $i=k$ we set

\begin{equation}\label{def _Mk}
\overline{M}_k=\frac{\ker (\gamma_{k,M})}{\ima(\beta_{k,M})}\oplus \ima(\gamma_{k,M})\oplus 
           \frac{\ker(\alpha_{k,M})}{\ima(\gamma_{k,M})}.         
\end{equation}                  

For the action of the arrows we note that $\overline{M}_{\din}(k)=M_{\out}(k)$ and $\overline{M}_{\out}(k)=M_{\din}(k)$, now define

\begin{equation}\label{alpha* beta*}
\alpha_{k,\overline{M}}=(b^*_1, \ldots b^*_t),  \  \   \    \ 
\beta_{k, \overline{M}}=
\left(
\begin{array}{lcr}
a_{1}^*  \\
a_{2}^*  \\
\vdots\\
a_{s}^*  
\end{array}
\right).
\end{equation}

We choose a retraction and a section
\begin{equation}\label{rho, sigma}
\rho_M:M_{\out}(k)\longrightarrow \ker(\gamma_{k,M}), \ \ \ \sigma_M:\ker(\alpha_{k,M})/\ima(\gamma_{k,M})\longrightarrow \ker(\alpha_{k,M})
\end{equation}
In other words, we have $\rho_M\iota=\ident_{\ker(\gamma_{k,M})}$ and $\pi\sigma_M=\ident_{\ker(\alpha_{k,M})/\ima(\gamma_{k,M})}$, trough out $\iota$ and $\pi$ are the natural inclusion and projection respectively. We are ready to define the action of $\widetilde{Q}$ in $M$

\begin{equation}\label{def alpha* beta*}
\alpha_{k,\overline{M}}=
\left(
\begin{array}{lcr}
-\pi \rho_M  \\
-\gamma_{k,M} \\
 \  \ 0\\
\end{array}
\right),  \  \   \    \ 
\beta_{k, \overline{M}}=(0, \iota, \iota \sigma_M).
\end{equation}	
	In \cite{DWZI} the  \textit{mutation} in direction $k$ is defined   as $\widetilde{\mu}_k(M)_{\mbox{red}}=\overline{M}_{\mbox{red}}$, see Definition \ref{redPart}. There, the authors  proved that $\overline{M}$   is actually  a representation of $\widetilde{\mu}_k(Q,S)$ and that it does not depend of the splitting data \eqref{rho, sigma}, up to isomorphism, see \cite[Proposition 10.9]{DWZI}. Another fact proved in the same work is that the class of right-equivalence of $\widetilde{\mu}_k(M)_{\mbox{red}}$ is determined by the class of right-equivalence of $M$, see \cite[Proposition 10.10]{DWZI}.

	
\section{Involutivity of mutation}\label{section3}

Let $(Q,S)$ be a reduced  QP with no 2-cycles at $k$, and $M$ be a representation of $(Q,S)$. Denote by $\overline{\overline{M}}$  the representation $\widetilde{\mu}_k(\widetilde{\mu}_k(M))$ of $(\widetilde{\widetilde{Q}}, \widetilde{\widetilde{S}})=\widetilde{\mu}_k(\widetilde{\mu}_k(Q,S))$. From \cite[Theorem 5.7]{DWZI}, \cite[Proposition 10.11]{DWZI} and Remark \ref{M tor} we can see that $\mu_{k}^2(M)=\overline{\overline{M}}_{\operatorname{red}}=\overline{\overline{M}}^{\varphi}$, where $\varphi$ is defined as follows

\begin{equation}\label{right equiv}
\varphi: b_q\longmapsto -b_q, \mbox{ for $q=1, \ldots t$ and} \ \varphi \ \mbox{fix the rest of the arrows in} \ \widetilde{\widetilde{Q}}.
\end{equation}
 Where $\varphi$ is an automorphism of $K\langle\langle\widetilde{\widetilde{Q}}\rangle\rangle$ and  we are identifying the arrows $b_q$ on $Q$ with the arrows $b_q^{*^{*}}$ on $\widetilde{\widetilde{Q}}$ for $q= 1, \ldots, t$; see the proof of \cite[Theorem 5.7]{DWZI}.
 
In \cite[Theorem 10.13]{DWZI} was proved that $M$ is right-equivalent to $\mu_k^2(M)$. In fact, we can deduce from the proof of \cite[Theorem 10.13]{DWZI} a slightly sharper statement.
\begin{lemma}\label{involution}
Let $(Q,S)$ be a quiver with potential and $M$ be a representation of $(Q,P)$ such that $M$ does not have direct summands isomorphic to $S_k$. If $\varphi$ is as in \eqref{right equiv}, then $M$ is isomorphic to  $\overline{\overline{M}}^{\varphi}$.   
\end{lemma}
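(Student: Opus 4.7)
The plan is to construct an explicit representation isomorphism $\psi\colon M\to\overline{\overline{M}}^{\varphi}$ that is the identity at each vertex $i\neq k$ and an explicit vector space isomorphism $\psi_k\colon M_k\to\overline{\overline{M}}_k$ at $k$. The sign twist by $\varphi$ arises naturally: the minus signs in the formula \eqref{def alpha* beta*} for $\alpha_{k,\overline{M}}$ reappear inside $\alpha_{k,\overline{\overline{M}}}$, and when absorbed into $\psi_k$ they force a sign flip on every outgoing arrow $b_q$, which is exactly $\varphi$.

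First I would record three relations on $M$: the Jacobian relations $\partial_{a_p}(S)=0$ and $\partial_{b_q}(S)=0$ give $\alpha_{k,M}\gamma_{k,M}=0$ and $\gamma_{k,M}\beta_{k,M}=0$, and the hypothesis that $M$ has no $S_k$ direct summand translates to $\ker\beta_{k,M}\subseteq\ima\alpha_{k,M}$. Next I would compute the ingredients of $\overline{\overline{M}}$. Because the cubic term $[b_qa_p]\,a_p^*b_q^*$ of $\widetilde{S}$ gives $\partial_{a_p^*b_q^*}(\widetilde{S})=[b_qa_p]$ and the arrow $[b_qa_p]$ acts on $\overline{M}$ as $M_{b_q}M_{a_p}$, one obtains the key identity $\gamma_{k,\overline{M}}=\beta_{k,M}\alpha_{k,M}$. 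Applying \eqref{def _Mk} and \eqref{def alpha* beta*} to $\overline{M}$ then yields $\ker\gamma_{k,\overline{M}}=\alpha_{k,M}^{-1}(\ker\beta_{k,M})$, $\ima\beta_{k,\overline{M}}=\ker\alpha_{k,M}$, $\ima\gamma_{k,\overline{M}}=\beta_{k,M}(\ima\alpha_{k,M})$, and $\ker\alpha_{k,\overline{M}}=\ima\beta_{k,M}$. Under $\ker\beta_{k,M}\subseteq\ima\alpha_{k,M}$, the map $\alpha_{k,M}$ identifies the first summand of $\overline{\overline{M}}_k$ with $\ker\beta_{k,M}$, and the map induced by $\beta_{k,M}$ identifies the third summand with $M_k/\ima\alpha_{k,M}$.

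To define $\psi_k$ I would pick a direct sum decomposition $M_k=\ker\beta_{k,M}\oplus D\oplus C$ with $\ima\alpha_{k,M}=\ker\beta_{k,M}\oplus D$; then take $\rho_{\overline{M}}$ to be the projection of $M_{\din}(k)$ onto $\alpha_{k,M}^{-1}(\ker\beta_{k,M})$ along a complement $E$ with $\alpha_{k,M}(E)=D$, and $\sigma_{\overline{M}}$ to be the section whose image is $\beta_{k,M}(C)$. By Proposition 10.9 of \cite{DWZI} these choices do not affect the isomorphism class of $\overline{\overline{M}}$. Writing $v=v_1+v_2+v_3$ with $v_i$ in the $i$-th summand, set
\begin{equation*}
\psi_k(v)=\bigl(-[\widetilde{v}_1],\;-\beta_{k,M}(v_2),\;-[\beta_{k,M}(v_3)]\bigr),
\end{equation*}
where $\widetilde{v}_1\in\alpha_{k,M}^{-1}(\ker\beta_{k,M})$ is any preimage of $v_1$ under $\alpha_{k,M}$. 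That $\psi_k$ is a vector space isomorphism follows directly from the identifications above.

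The final step is the verification. Splitting $\alpha_{k,M}(x)=\alpha_{k,M}(\rho_{\overline{M}}(x))+\alpha_{k,M}(x-\rho_{\overline{M}}(x))$ according to $\ker\beta_{k,M}\oplus D$ and using $\beta_{k,M}\alpha_{k,M}(\rho_{\overline{M}}(x))=0$, direct substitution yields $\psi_k\alpha_{k,M}=\alpha_{k,\overline{\overline{M}}}$. By construction $\sigma_{\overline{M}}([\beta_{k,M}(c)])=\beta_{k,M}(c)$ for $c\in C$, from which $\beta_{k,\overline{\overline{M}}}\psi_k=-\beta_{k,M}$. These two identities say precisely that $\psi$ intertwines the action of each $a_p$ (fixed by $\varphi$) and anti-intertwines each $b_q$ (sent by $\varphi$ to $-b_q$), so $\psi\colon M\to\overline{\overline{M}}^{\varphi}$ is the desired representation isomorphism. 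The central obstacle is the bookkeeping of the various sections, complements, and signs; the $S_k$-free hypothesis enters at exactly the single point $\ker\beta_{k,M}\subseteq\ima\alpha_{k,M}$, without which the first and third summands of $\overline{\overline{M}}_k$ would have the wrong dimensions to admit an isomorphism from $M_k$.
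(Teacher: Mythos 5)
Your proposal is correct, and it in fact supplies an argument where the paper gives none: the paper states Lemma \ref{involution} with only a pointer to the proof of \cite[Theorem 10.13]{DWZI}, so your explicit construction is the missing deduction rather than an alternative to a written proof. Your route is the natural one and coincides with the machinery the paper deploys later: the identity $\gamma_{k,\overline{M}}=\beta_{k,M}\alpha_{k,M}$ and the identifications $\ker\gamma_{k,\overline{M}}=\ker(\beta_{k,M}\alpha_{k,M})$, $\ima\beta_{k,\overline{M}}=\ker\alpha_{k,M}$, $\ker\alpha_{k,\overline{M}}=\ima\beta_{k,M}$, $\ima\gamma_{k,\overline{M}}=\ima(\beta_{k,M}\alpha_{k,M})$ are exactly the relations \eqref{rel alpha _alpha} in the proof of Theorem \ref{qinverse}, the decomposition $M_k=\ker\beta_{k,M}\oplus D\oplus C$ with $\ima\alpha_{k,M}=\ker\beta_{k,M}\oplus D$ is the filtration and sections \eqref{sec para psi}--\eqref{cond de sigma_i}, and your $\psi_k$ is precisely the inverse of the paper's $\psi_{k,M}=(-\iota,-\iota\sigma_{1,M},-\iota\sigma_{2,M})$, with the same global sign producing the twist by $\varphi$ on the $b_q$. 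I verified the two computations you state: choosing $\rho_{\overline{M}}$ with $\alpha_{k,M}(E)=D$ gives $\psi_k\alpha_{k,M}=\alpha_{k,\overline{\overline{M}}}$ componentwise (the second entries being $-\beta_{k,M}\alpha_{k,M}(x)=-\gamma_{k,\overline{M}}(x)$), and choosing $\sigma_{\overline{M}}$ with image $\beta_{k,M}(C)$ gives $\beta_{k,\overline{\overline{M}}}\psi_k=-\beta_{k,M}$; the hypothesis $\ker\beta_{k,M}\subseteq\ima\alpha_{k,M}$ enters exactly where you say. The only point left tacit, on both your side and the paper's, is the behaviour on the arrows of $\widetilde{\widetilde{Q}}$ not incident to $k$ (the composites $[b_qa_p]$, $[a_p^*b_q^*]$ created and then removed in the reduction): strictly speaking the identification of $\overline{\overline{M}}^{\varphi}$ with a representation of $(Q,S)$ already presupposes the right-equivalence of \cite[Theorem 5.7]{DWZI}, and one should note that $\psi$ is compatible with it; since $\psi_i=\ident$ for $i\neq k$ and those arrows act identically on both sides after reduction, this is routine, but worth a sentence.
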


\section{Definition of the functor $\mu_k^+$}

Giving a reduced quiver with potential $(Q,S)$, let $\mathcal{P}(Q,S)$ be the Jacobian algebra of $(Q,S)$. We write $\Lambda=\mathcal{P}(Q,S)$ and  $\Lambda'=\mathcal{P}(\mu_k(Q,S))$.

 We fix a vertex $k\in Q_0$. If $f:M\rightarrow N$ be a morphism of representations of  $(Q,S)$, we say that $f$ is \textit{confined to} $k$ if  $f(m)=0$, $\forall m\in M_{\widehat{k}}$, with $M_{\widehat{k}}=\bigoplus_{i\neq k}{M_i}$. The set of all morphisms confined to $k$ is denoted by $\Hom_{\Lambda\modulo}^k(M,N)$.
 From \cite[Section 6]{DWZII} we have that   
\begin{equation*}
\Hom_{\Lambda\module / [\add  S_k]}(M,N) \ = \ \Hom_{\Lambda\module}(M,N)/\Hom_{\Lambda\module}^k(M,N).  
 \end{equation*}

In \cite[Theorem 7.1]{BIRS} and  \cite[Proposition 6.2]{DWZII} were proved that there is an equivalence between $\Lambda\module / [\add  S_k]$ and $\Lambda'\module / [\add  S'_k]$. Their proofs use the Axiom of Choice, indeed  their arguments consist by showing that a functor $F$ is full, faithful and dense, but it is well known that  $F$ is an equivalence thanks to the Axiom of Choice. In this note we provide an explicit quasi-inverse of the equivalence defined in \cite[Theorem 7.1]{BIRS}.

From \cite{BIRS} the functor $\mu_k^+: \Lambda\module / [\add  S_k]\rightarrow \Lambda'\module / [\add  S'_k]$ defined below is an equivalence. If $M\in \Lambda\module$, then $\mu_k^+(M)$ is defined as \eqref{def _Mk}. Now, given $f\in\Hom_{\Lambda\module}(M,N)$  we proceed as follows to define $\mu_k(f)$. We set $\mu_k(f)_j:=f_j$ for $j\neq k$. In order to define   $\mu_{k}^+(f)_k:\overline{M}_k\rightarrow \overline{N}_k$ we consider the following diagram obtained from \eqref{def _Mk}

\begin{equation}\label{exacta2}
\def\objectstyle{\scriptstyle}
\def\labelstyle{\scriptstyle}
\vcenter{\xymatrix @-0.8pc {
  0\ar@{->}[r]& \frac{\ker(\gamma_{k,M})}{\ima(\beta_{k,M})}\ar@<0.5ex>[r]^{\widetilde{i}_M}&
\coker(\beta_{k,M})\ar@{->}[rr]^{\iota_M \widetilde{\gamma}_{k,M}}\ar@<0.5ex>[dr] ^{\widetilde{\gamma}_{k,M}}\ar@<0.5ex>[l]^{\widetilde{\rho}_M}&   \  \  & \ker(\alpha_{k,M})\ar@<0.5ex>[r]^{\pi}\ar@<0.5ex>[ld]^{\epsilon_M}& \frac{\ker(\alpha_{k,M})}{\ima(\gamma_{k,M})}\ar@{->}[r]\ar@<0.5ex>[l]^{\sigma_M}& 0\\
 &  &    &  \ima(\gamma_{k,M})\ar@<0.5ex>[ru]^{\iota_M}\ar@{->}[dr]\ar@<0.5ex>[ul]^{j_M} &   &  & \\
  &    &   0\ar@{->}[ru]  &  & 0  &   & 
}}
\end{equation}
 
With $\widetilde{i}_M\colon\frac{\ker(\gamma_{k,M})}{\ima(\beta_{k,M})}\rightarrow \coker(\beta_{k,M})$ and  $\widetilde{\gamma}_{k,M}:\coker(\beta_{k,M})\rightarrow \ima(\gamma)$ the natural induced maps. The map $\widetilde{\rho}_M\colon \coker(\beta_{k,M})\rightarrow \frac{\ker(\gamma_{k,M})}{\ima(\beta_{k,M})}$ is induced by $\rho_{M}$ (see \eqref{rho, sigma}).
The choice of $\widetilde{\rho}_M$ and\ $\sigma_M$ allow to define maps $j_M$ and $\epsilon_M$ respectively, such that $\ident_{\coker(\beta_{k,M})}=\widetilde{i}_M\widetilde{\rho}_M+j_M \widetilde{\gamma}_{k,M}$ and $\ident_{\ker(\alpha_{k,M})}=\iota_M\epsilon_M +\sigma_M\pi$.
 
We get a similar diagram for $N$, then we obtain the following diagram where the central square is commutative 

\begin{equation}\label{diagram f_out}
\xymatrix{\frac{\ker(\alpha_{k,M})}{\ima(\gamma_{k,M})}\ar@{->}[dr]^{\sigma_M}& & & \frac{\ker(\alpha_{k,N})}{\ima(\gamma_{k,N})}\\
\bigoplus & \ker(\alpha_{k,M})\ar@{->}[r]^{f_{\din}}&\ker(\alpha_{k,N})\ar@{->}[ur]^{\pi'}\ar@{->}[dr]^{\epsilon_N}& \bigoplus\\
\ima(\gamma_{k,M})\ar@{->}[ur]^{\iota_M}\ar@{->}[dr]_{j_M} &  &  & \ima(\gamma_{k,N})\\
\bigoplus&\coker(\beta_{k,M})\ar@{-->}[uu]_{\widetilde{\gamma}_{k,M}}\ar@{->}[r]_{\overline{f_{\out}}}&\coker(\beta_{k,N})\ar@{->}[ur]^{\widetilde{\gamma}_{k,N}}\ar@{->}[dr]_{\widetilde{\rho}_N}\ar@{-->}[uu]_{\widetilde{\gamma}_{k,N}}&\bigoplus\\
\frac{\ker(\gamma_{k,M})}{\ima(\beta_{k,M})}\ar@{->}[ur]_{\widetilde{i}_M} &   &  &  \frac{\ker(\gamma_{k,N})}{\ima(\beta_{k,N})}
}
\end{equation}

Here $\overline{f_{\out}}$ is  the induced map by $f_{\out}\colon M_{\out}(k)\rightarrow N_{\out}(k)$. Now we can already give the definition of $\mu_k^+(f)_k$:

\begin{eqnarray}\label{f_k1}
\mu_k^+(f)_k=
\left(
\begin{array}{lcr}
\widetilde{\rho}_N\overline{f_{\out}}\widetilde{i}_M \ \ & \  \ \widetilde{\rho}_N\overline{f_{\out}}j_M & 0 \  \ \  \\
 \widetilde{\gamma}_{k,N}\overline{f_{\out}}\widetilde{i}_M \ \ & \ \ \epsilon_Nf_{\din}\iota_M \ \ & \ \ \epsilon_Nf_{\din}\sigma_M\\
\ \ \  0 \  \  & \  \  \pi' f_{\din}\iota_M \ \ & \  \ \pi'f_{\din}\sigma_M
\end{array}
\right).
\end{eqnarray}


\section{Quasi-inverse}

 The notion of  right-equivalence of representations, see Definition \ref{def r-equivalence}, is a delicate point in this subject. One reason is that  the classes of right-equivalence may have a different behavior of  isomorphism classes, for example  see \cite{DWZI}[Remark 10.3]. Indeed, it is possible to find two no isomorphic representations that are right-equivalent. Since \cite{DWZI}[Theorem 10.13] is stated in terms of the notion of right-equivalence, it is not obvious that one may say something about  isomorphism classes as in the proof of  \cite{BIRS}[Theorem 7.1]. In this section we use the  notion of right-equivalence for representations of a quiver with potential to define  explicitly a quasi inverse for $\mu_k^+$.

Let $(P,T)$ be a reduced  quiver with potential with no 2-cycles at $k\in P_0$. We define $\widetilde{\mu}_k^-(P,T)$ as $\widetilde{\mu}_k^-(P,T)=(\widetilde{\mu}_k(P), \widetilde{T}^-)$, with

\begin{equation}\label{S-}
  \widetilde{T}^{-}=[T]-\sum_{\substack{a,b\in P_1:\\
	h(a)=k=t(b)}}
	{[ba]a^*b^*}
\end{equation}

 We define $\mu_k^-(P,T)$ as the reduced part of $\widetilde{\mu}_k^-(P,T)$.  This notion of  mutation induces a mutation  of representations of $(P,T)$. Let $M$ be a representation of $(P,T)$. We define  $\widetilde{\mu}_k^-(M)$ as vector space the same form that $\widetilde{\mu}_k^+(M)$, (see \eqref{def _Mk}). We need define the actions of arrows in $\widetilde{\mu}_k^-(M)$, (see \eqref{alpha* beta*}). After choose maps as \eqref{rho, sigma} we obtain the new version of \eqref{def alpha* beta*} 

\begin{equation}\label{alpha- beta-}
\alpha_{k,\overline{M}^-}^-=
\left(
\begin{array}{lcr}
\pi \rho_M  \\
\gamma_{k,M} \\
 \  \ 0\\
\end{array}
\right),  \  \   \    \ 
\beta_{k, \overline{M}^-}^-=(0, \iota, \iota \sigma_M)
\end{equation}
Then we define $\mu_k^-(M)=\widetilde{\mu}_k^-(M)_{\mbox{red}}$. This is well defined up to right-equivalence.

Returning to our task, we have  $\mu_k^-:\Lambda'\modulo / [\add  S'_k]\rightarrow \Lambda\modulo / [\add  S_k]$, the notation is the same to the last section.

\begin{theorem}\label{qinverse}
$\mu_k^-$ is a quasi-inverse of $\mu_k^+$.
\end{theorem}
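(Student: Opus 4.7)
The plan is to exhibit, for each $M \in \Lambda\module$, an explicit module isomorphism $\eta_M\colon M \to \mu_k^-\mu_k^+(M)$ which becomes natural in $M$ after passing to the quotient $\Lambda\module/[\add S_k]$, and analogously an $\eta'_N\colon N \to \mu_k^+\mu_k^-(N)$. The two directions are symmetric, so let me describe only the first.

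Given $M\in\Lambda\module$, I first split off the $\add S_k$-part of $M$; it is zero in the quotient, so I may assume $M$ satisfies the hypothesis of Lemma~\ref{involution}. The key observation is that the right-equivalence $\varphi$ of \eqref{right equiv} appearing in Lemma~\ref{involution} corrects a double occurrence of the $+$ sign built into \eqref{def alpha* beta*} and \eqref{def widetilde{S}}; replacing the second application of $\widetilde{\mu}_k^+$ by $\widetilde{\mu}_k^-$, whose formulas \eqref{alpha- beta-} and \eqref{S-} carry the opposite signs, absorbs $\varphi$ into the functor. On the QP side, a direct computation rewrites $\widetilde{\mu}_k^-\widetilde{\mu}_k^+(S)$ cyclically as $[S]+\sum_{a,b}[a^*b^*]([ba]-ba)$, and the change of variable $[ba]\mapsto [ba]+ba$ exhibits it as a direct sum, in the sense of the splitting theorem \cite[Theorem~4.6]{DWZI}, of $(Q,S)$ and a trivial QP in the 2-cycles $[a^*b^*][ba]$. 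Rerunning the proof of Lemma~\ref{involution} (equivalently \cite[Theorem~10.13]{DWZI}) verbatim with $\widetilde{\mu}_k^-$ in place of the second $\widetilde{\mu}_k^+$, and using the retractions and sections fixed in \eqref{rho, sigma}, thus produces a module isomorphism $\eta_M\colon M \to \widetilde{\mu}_k^-\widetilde{\mu}_k^+(M)$ with no twist. Reducing and passing to the quotient gives the desired $\eta_M$ in $\Lambda\module/[\add S_k]$.

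Naturality in the quotient category is then essentially automatic. Given $f\colon M\to N$, the morphism $\delta := \eta_N f - \mu_k^-\mu_k^+(f)\eta_M$ is an honest $\Lambda$-module morphism $M \to \mu_k^-\mu_k^+(N)$, and its $i$-th component for $i \neq k$ vanishes because $\eta$ and both $\mu_k^\pm$ act as identity off the vertex $k$: $\delta_i = f_i - f_i = 0$. Hence $\delta$ lies in $\Hom^k_{\Lambda\module}(M,\mu_k^-\mu_k^+(N))$, and so $\delta = 0$ in $\Lambda\module/[\add S_k]$ by the formula for $\Hom$ in the quotient recalled in Section~4. The same argument makes $\eta_M$ well-defined in the quotient independently of the choices of $\rho_M,\sigma_M, \rho_{\overline{M}},\sigma_{\overline{M}}$, since any two choices produce $\eta_M$'s agreeing away from vertex $k$.

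The main obstacle will be constructing $\eta_{M,k}$ and checking that it intertwines the action of every arrow at $k$; this requires verifying explicitly that the sign cancellation between \eqref{def alpha* beta*}/\eqref{alpha- beta-} and \eqref{def widetilde{S}}/\eqref{S-} makes the DWZ involution proof go through without the twist $\varphi$ at the second mutation. The statement $\mu_k^+\mu_k^-\cong\ident$ then follows by the mirror argument, swapping the roles of $(Q,S)$ and $\mu_k(Q,S)$ and of $+/-$.
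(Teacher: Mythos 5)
Your proposal is correct and follows essentially the same route as the paper: the key point in both is that the sign built into \eqref{alpha- beta-} and \eqref{S-} cancels the twist $\varphi$ of \eqref{right equiv} so that one gets an honest module isomorphism $M\cong\mu_k^-\mu_k^+(M)$, and naturality (and independence of the choices in \eqref{rho, sigma}) is obtained exactly as you say, by observing that all discrepancies are confined to the vertex $k$ and hence vanish in $\Lambda\module/[\add S_k]$. The only difference is one of presentation: where you defer the construction of the component at $k$ to a rerun of the proof of \cite[Theorem 10.13]{DWZI}, the paper carries out that linear algebra explicitly, identifying $\mu_k^-\mu_k^+(M)_k$ with $\ker(\beta_{k,M})\oplus\ima(\alpha_{k,M})/\ker(\beta_{k,M})\oplus M_k/\ima(\alpha_{k,M})$ and writing down the isomorphism $\psi_{k,M}=(-\iota,-\iota\sigma_{1,M},-\iota\sigma_{2,M})$ in closed form.
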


\begin{proof}
Let us write $M'$ for  $\mu_k^-\mu_k^+(M)$  to relax the notation. First, we  compute $M'$ as vector space. We then describe the  structure of $M'$ as $K\langle\langle Q\rangle\rangle$-module. 
 
By definition we have  $M'_i=M_i$, $i\neq k$. Now from \eqref{def _Mk} we can deduce that

\begin{equation}\label{def 'Mk}
M'_k=\frac{\ker (\gamma_{k,\overline{M}})}{\ima(\beta_{k,\overline{M}})}\oplus \ima(\gamma_{k,\overline{M}})\oplus 
           \frac{\ker(\alpha_{k,\overline{M}})}{\ima(\gamma_{k,\overline{M}})},
\end{equation}

where $\overline{M}=\widetilde{\mu}_k^+(M)$ as representation of $\widetilde{\mu}_k^+(Q,S)$. Applying the definitions we see
\begin{eqnarray}\label{rel alpha _alpha}
\ker(\alpha_{k,\overline{M}})=\ima(\beta_{k,M}), \  \  \ima(\alpha_{k,\overline{M}})=\frac{\ker(\gamma_{k,M})}{\ima(\beta_{k, M})}\oplus \ima(\gamma_{k,M})\oplus \left\{0\right\}, \\
\nonumber \ker(\beta_{k,\overline{M}})=\frac{\ker(\gamma_{k,M})}{\ima(\beta_{k, M})}\oplus \left\{0\right\}\oplus \left\{0\right\}, \   \  \ima(\beta_{k, \overline{M}})=\ker(\alpha_{k,M}),\\
\nonumber \ker(\gamma_{k, \overline{M}})=\ker(\beta_{k,M}\alpha_{k,M}) \  \ \ima(\gamma_{k, \overline{M}})=\ima(\beta_{k,M}\alpha_{k,M}).
\end{eqnarray}
By rewriting \eqref{def 'Mk} we get
\begin{equation}\label{def M'_k}
M'_k=\frac{\ker (\beta_{k,M}\alpha_{k,M})}{\ker(\alpha_{k,M})}\oplus \ima(\beta_{k,M}\alpha_{k,M})\oplus 
           \frac{\ima(\beta_{k,M})}{\ima(\beta_{k,M}\alpha_{k,M})}
\end{equation}
Now we observe:
\begin{itemize}
\item $\alpha_{k,M}$ induces an isomorphism $$\widetilde{\alpha}:\frac{\ker(\beta_{k,M}\alpha_{k,M})}{\ker(\alpha_{k,M})}\longrightarrow \ker(\beta_{k.M}), \  [x]\mapsto \alpha_{k,M}(x).$$

\item $\beta_{k,M}$ induces an isomorphism
$$\widetilde{\beta}:\frac{\ima(\alpha_{k,M})}{\ker(\beta_{k,M})}\longrightarrow \ima(\beta_{k,M}\alpha_{k,M}), \  [x]\mapsto \beta_{k,M}(x).$$

\item $\beta_{k,M}$ induces an isomorphism
$$\widehat{\beta}:\frac{M_k}{\ima(\alpha_{k,M})}\longrightarrow  \frac{\ima(\beta_{k,M})}{\ima(\beta_{k,M}\alpha_{k,M})}, \  [x]\mapsto [\beta_{k,M}(x)].$$
\end{itemize}

Since $M$ does not have direct summands isomorphic to $S_k$ we have $\ker(\beta_{k, M})\subseteq \ima(\alpha_{k,M})$. Combine \eqref{rel alpha _alpha} with the induced isomorphisms above to represent $M'_k$ in the following way

\begin{equation}\label{ident M'}
M'_k=\ker(\beta_{k.M})\oplus \frac{\ima(\alpha_{k,M})}{\ker(\beta_{k,M})}\oplus \frac{M_k}{\ima(\alpha_{k,M})}.
\end{equation}
With this form for $M'_k$ we choose linear maps as in \eqref{rho, sigma}, that is
$$\rho_{\overline{M}}:M_{\din}(k)\rightarrow \ker(\beta_{k,M}\alpha_{k,M}), \ \ \sigma_{\overline{M}}:\ima(\beta_{k,M})/\ima(\beta_{k,M}\alpha_{k,M})\rightarrow \ima(\beta_{k,M}).$$
Thus, $\rho_{\overline{M}}\iota=\ident_{\ker(\beta_{k,M}\alpha_{k,M})} $, and $\pi\sigma_{\overline{M}}=\ident_{\ima(\beta_{k,M})/\ima(\beta_{k,M}\alpha_{k,M})}$.
Now we  can define the structure of $M'$ as a representation. To do that, define linear maps as in \eqref{def alpha* beta*}:

\begin{equation}\label{def alpha' beta'}
\alpha_{k,M'}^-=
\left(
\begin{array}{lcr}
\alpha_{k,M} \rho_{\overline{M}}  \\
\pi\alpha_{k,M} \\
 \   \   \  \ 0
\end{array}
\right),  \  \   \    \ 
\beta_{k, M'}^-=(0, \widetilde{\beta}_{k,M}, \iota\sigma_{\overline{M}}\widehat{\beta}_{k,M}).
\end{equation}
Note that in  this case we do not need $\varphi$, see \eqref{right equiv} and the proof of \cite[Theorem 5.7]{DWZI}.

Let $f:M\rightarrow N$ be a morphism of representations of $(Q,S)$. With the form of $\mu_k⁻\mu_k^+(M)$ we can deduce the new version of \eqref{exacta2} and \eqref{diagram f_out}. Again the center square is commutative and $\overline{f_{\din}}$ is induced by $f_{\din}$.
\begin{equation}\label{diagram f_in}
\xymatrix{\frac{M_k}{\ima(\alpha_{k,M})}\ar@{->}[dr]^{\sigma_{\overline{M}}\widehat{\beta}_{k,M}}& & & \frac{N_k}{\ima(\alpha_{k,N})}\\
\bigoplus & \ima(\beta_{k,M})\ar@{->}[r]^{f_{\out}}&\ima(\beta_{k,N})\ar@{->}[ur]^{\widehat{\beta}_{k,N}^{-1}\pi}\ar@{->}[dr]^{\widetilde{\beta}_{k,N}^{-1}\epsilon_{\overline{N}}}& \bigoplus\\
\frac{\ima(\alpha_{k,M})}{\ker(\beta_{k,M})}\ar@{->}[ur]^{\iota_{\overline{M}}\widetilde{\beta}_{k,M}}\ar@{->}[dr]_{j_{\overline{M}}\widetilde{\beta}_{k,M}} &  &  & \frac{\ima(\alpha_{k,N})}{\ker(\beta_{k,N})}\\
\bigoplus&\frac{M_{\din}(k)}{\ker(\alpha_{k,M})}\ar@{-->}[uu]_{\widetilde{\gamma}_{k,\overline{M}}}\ar@{->}[r]_{\overline{f_{\din}}}&\frac{N_{\din}(k)}{\ker(\alpha_{k,N})}\ar@{->}[ur]_{\widetilde{\beta}_{k,M}^{-1}\widetilde{\gamma}_{k,\overline{N}}}\ar@{->}[dr]_{\widetilde{\alpha}_{k,N}\widetilde{\rho}_{\overline{N}}}\ar@{-->}[uu]^{\widetilde{\gamma}_{k,\overline{N}}}&\bigoplus\\ 
\ker(\beta_{k,M})\ar@{->}[ur]_{\widetilde{i}_{\overline{M}}\widetilde{\alpha}_{k,M}^{-1}} &   &  &  \ker(\beta_{k,N})
}
\end{equation}

Finally we define $\mu_k^-\mu_k^+(f)_k$ and denote it as $f'_k$:

\begin{eqnarray*}\label{def +-f}
f'_k=
\left(
\begin{array}{lcr}
\widetilde{\alpha}_{k,N}
\widetilde{\rho}_{\overline{N}}\overline{f_{\din}}\widetilde{i}_{\overline{M}}
\widetilde{\alpha}_{k,M}^{-1} \  &   \ 
\widetilde{\alpha}_{k,N}\widetilde{\rho}_{\overline{N}}\overline{f_{\din}}j_{\overline{M}} \widetilde{\beta}_{k,M}&   0 \  \  \  \  \  \  \  \  \  \  \\
\widetilde{\beta}_{k,N}^{-1}\widetilde{\gamma}_{k,\overline{N}}\overline{f_{\din}}\widetilde{i}_{\overline{M}}\widetilde{\alpha}_{k,M}^{-1} \ &  \ \widetilde{\beta}_{k,N}^{-1}\epsilon_{\overline{N}}f_{\out}\iota_{\overline{M}}
\widetilde{\beta}_{k,M} \ &  \ \widetilde{\beta}_{k,M}^{-1}\epsilon_{\overline{N}}f_{\out}\sigma_{\overline{M}}
\widehat{\beta}_{k,M}\\
\ \ \ \  \  \  \   \   \  \  \ 0 \   &   \  \widehat{\beta}_{k,N}^{-1}\pi f_{\out}\iota_{\overline{M}}\widetilde{\beta}_{k,M} \  &   \ \widetilde{\beta}_{k,N}^{-1}\pi f_{\out}\sigma_{\overline{M}}\widehat{\beta}_{k,M}
\end{array}
\right),
\end{eqnarray*}
remember that $f'_i=f_i$ if $i\neq k$.

To finish the proof we need to give an natural isomorphism $\psi_{k,M}: M'_k\rightarrow M_k$ such that 
\begin{equation}\label{cond psi}
\psi_{k,M}\alpha_{k,M'}=\alpha_{k,M},   \   \    \    \beta_{k,M}\psi_{k,M}=\beta_{k,M'}.
\end{equation}
If $i\neq k$ we set  $\psi_{i,M}=\ident_{M_i}$, and we denote $\psi_M=(\psi_{i,M})_{i\in Q_0}$. From the expression \eqref{ident M'} we observe  that  have the following filtration
\begin{equation}
\left\{0\right\}\subseteq \ker(\beta_{k,M})\subseteq \ima(\alpha_{k,M})\subseteq  M_{k}
\end{equation}
 
It can be checked that we can choose sections
\begin{eqnarray}\label{sec para psi}
\begin{array}{lcr}
\sigma_{1,M}:\ima(\alpha_{k,M})/\ker(\beta_{k,M})\longrightarrow \ima(\alpha_{k,M}),\\
\sigma_{2,M}:M_k/\ima(\alpha_{k,M})\longrightarrow M_k;
\end{array}
\end{eqnarray}
with fulfill 
\begin{equation}\label{cond de sigma_i}
\ima(\sigma_{1,M})=\alpha_{k,M}(\ker(\overline{\rho})), \ \
\ima(\beta_{k,M}\sigma_{2,M})=\ima(\overline{\sigma})
\end{equation}
If we define $\psi_{k,M}=(-\iota , -\iota \sigma_{1,M} , -\iota\sigma_{2,M})$, then  it can be proved that $\psi$ is a isomorphism.  By multiplying the respective matrix and taking into account \eqref{sec para psi} we get \eqref{cond psi}.

Since $f\psi_M-\psi_N\mu_k^-\mu_k^+(f)$ is confined to $k$, we have the commutative diagram in $\Lambda\modulo / [\add  S_k]$.
\begin{eqnarray}\label{psi natural}
\xymatrix{\mu_k^-\mu_k^+(M)\ar@{->}[r]^(0.6){\psi_{M}}\ar@{->}[d]_{\mu_k^-\mu_k^+(f)}& M\ar@{->}[d]^{f}\\
\mu_k^-\mu_k^+(N)\ar@{->}[r]_(0.6){\psi_{N}}& N
}
\end{eqnarray}
The other composition $\mu_k^+\mu_k^-$ is similar. Therefore the result follows.
\end{proof}

\end{document}